\documentclass[12pt, reqno, a4paper]{amsart}
\usepackage{amsmath,mathrsfs}
\usepackage{amssymb}
\usepackage{color}
\usepackage{calligra}
\usepackage{dsfont}
\usepackage{pstricks,pst-node}
\usepackage[latin1]{inputenc}
\usepackage{marginnote}
\usepackage{pdfsync}
\usepackage{todonotes}
\usepackage{multibib}
\usepackage{url}

\newcommand\NoBlackBoxes{\global\overfullrule0pt}
\NoBlackBoxes

\newcommand{\N}{\mathbb{N}}


\makeatletter
\let\serieslogo@\relax
\let\@setcopyright\relax


\parindent0em
\textwidth15cm

\textheight22cm

\topmargin-1.2cm
\evensidemargin-0.3cm
\oddsidemargin-0.3cm
\parindent 0cm

\newtheorem{definition}{Definition}[section]
\newtheorem{theorem}[definition]{Theorem}

\newtheorem{rem}[definition]{Remark}

\newcommand{\E}{{\mathbb{E}}}

\newcommand{\R}{{\mathbb{R}}}

\newcommand{\V}{\mathbb{V}}

\newcommand{\tr}{\operatorname{Tr}}

\renewcommand{\epsilon}{\varepsilon}
\renewcommand{\phi}{\varphi}

\numberwithin{equation}{section}

\begin{document}

\setcounter{page}{1}

\title[Recovery in block spin Ising models at criticality]{Exact Recovery in block spin Ising models at the critical line}

\author[Matthias L\"owe]{Matthias L\"owe }
\address[Matthias L\"owe]{Fachbereich Mathematik und Informatik,
Universit\"at M\"unster,
Orl\'{e}ans-Ring 10,
48149 M\"unster,
Germany}

\email[Matthias L\"owe]{maloewe@math.uni-muenster.de}

\author[Kristina Schubert]{Kristina Schubert}
\address[Kristina Schubert]{Fakult\"at f\"ur  Mathematik,
Technische Universit\"at Dortmund,
Vogelpothsweg 87,
44227 Dortmund,
Germany}

\email[Kristina Schubert]{kristina.schubert@tu-dortmund.de}

\keywords{Block models, Ising model, Curie-Weiss model, fluctuations, critical temperature}

\newcommand{\wlim}{\mathop{\hbox{\rm w-lim}}}
\newcommand{\na}{{\mathbb N}}
\newcommand{\re}{{\mathbb R}}

\newcommand{\vep}{\varepsilon}

\begin{abstract}
We show how to exactly reconstruct the block structure at the critical line in the so-called Ising block model. This model was recently re-introduced by Berthet, Rigollet and Srivastava in \cite{BRS17_blockmodel}. There the authors show how to exactly reconstruct blocks away from the critical line and they give an upper and a lower bound on the number of observations one needs;  thereby they
establish a minimax optimal rate (up to constants). Our technique relies on a combination of their methods with fluctuation results obtained in \cite{LSblock1}. The latter are extended to the full critical regime. We find that the number of necessary observations depends on whether the interaction parameter between two blocks is positive or negative: In the first case, there are about $N \log N$ observations required to exactly recover the block structure, while in the latter case $\sqrt N \log N$ observations suffice.
\end{abstract}

\maketitle

\section{Introduction}

 In a recent paper Berthet, Rigollet and Srivastava rediscovered a block version of the Curie-Weiss-Ising model \cite{BRS17_blockmodel}. This model had been introduced earlier in the statistical physics literature, see e.g.~\cite{gallo_contucci_CW}, \cite{gallo_barra_contucci}, \cite{fedele_contucci}, \cite{collet_CW}. Extensions of these models are studied in \cite{KLSS19} or \cite{LSV19}. The first of these papers uses a very general interaction structure, while the latter investigates the situation in the spirit of social interaction models or statistical physics models on random graphs, see \cite{opoku}, \cite{BG92},\cite{contloewe}, and \cite{kl07}. A related version of this model has been investigated using the method of moments in \cite{werner_curie_weiss},
\cite{werner_curie_weiss2} and \cite{werner_curie_weiss_crit}.

The article by Berthet et al.~is motivated by a considerable amount of articles investigating block models in the recent past, see e.g.~\cite{AL18}, \cite{GMZZ17}, \cite{BMS13}, \cite{MNS16}, \cite{Bre15}.
The model is interesting from both a probabilistic and a statistical perspective.

To define the model, one starts by partitioning the set $\{1, \ldots, N\}$ into a set $S\subset \{1, \ldots, N\}$ with $|S|=\frac{N}{2}$ and  its complement $S^c$. To this end we need to assume that $N$ is even; the model itself can be defined and analyzed for arbitrary block sizes (see \cite{KLSS19}, where large deviations and Central Limit Theorems are proved for a general block structure), but the statistical questions then become more tricky.

The set $S$ induces a Hamiltonian on the binary hypercube $\{-1,+1\}^N, N\in~\N$ given by
\begin{equation}\label{hamil1}
H_{N,\alpha, \beta, S}(\sigma):= -\frac \beta {2N} \sum_{i \sim j} \sigma_i \sigma_j -\frac \alpha {2N} \sum_{i \not\sim j} \sigma_i \sigma_j, \quad \sigma \in \{-1,+1\}^N.
\end{equation}
Here we choose $\beta>0$ and $0\le |\alpha| \le \beta$,
and we write $i \sim j$, if either $i, j \in S$ or $i,j \in S^c$ and $i \not \sim j$, otherwise.
The Hamiltonian \eqref{hamil1} (or energy function), in turn, induces a Gibbs measure on $\{-1,+1\}^N$ given by:
\begin{equation*}
\mu_{N, \alpha, \beta} (\sigma)=\mu_{N, \alpha, \beta,S} (\sigma):= \frac{e^{-H_{N,\alpha, \beta}(\sigma)}}{\sum_{\sigma'}e^{-H_{N,\alpha, \beta}(\sigma')}}=:
\frac{e^{-H_{N,\alpha, \beta}(\sigma)}}{Z_{N, \alpha,\beta}},
\end{equation*}
where $\sigma= (\sigma_i)_{i=1}^N \in \{-1,+1\}^N.$

The behavior of the model can sometimes be studied best, when analyzing an
order parameter. In this case such an order parameter is given by the vector of block magnetizations, $m:=m^N:=(m^N_1, m^N_2)$, where
$$
m_1:=m^N_1:=m_1(\sigma):= \frac {2} {  N} \sum_{i \in S} \sigma_i \quad \mbox{and } \quad  m_2:=m^N_2:=m_2(\sigma):= \frac {2} {N} \sum_{i \notin S} \sigma_i.
$$
Its advantage is that the Hamiltonian can be rewritten as
$$
H_{N,\alpha, \beta, S}(\sigma)= -\frac N 2 \left(\frac{1}{2}\alpha m_1 m_2 +\beta \frac{1}{4} m_1^2+ \frac{1}{4} \beta m_2^2\right).
$$
To describe and understand the phase transitions in the model recall that without a partitioning (and with $\alpha=\beta$) we would be back in  the situation of the so-called Curie-Weiss model at inverse temperature $\beta$. This model is defined by the Hamiltonian
$H_{CW}(\sigma)= \frac 1{2N}\sum_{i,j} \sigma_i \sigma_j$ for $\sigma \in  \{\pm 1\}^N$ and the corresponding Gibbs measure
$\mu^{CW}_{N,\beta}(\sigma)=\frac{e^{-\beta H_{CW}(\sigma)}}{Z^{CW}_{N,\beta}}$.
It has been extensively studied, see e.g.~\cite{Ellis-EntropyLargeDeviationsAndStatisticalMechanics}. In particular, it has been shown that its equilibrium measures are intrinsically related to the largest solution, $m^+(\beta)$, of the equation
\begin{equation*}
z = \tanh(\beta z).
\end{equation*}
More precisely, for all $\beta \ge 0$, in the Curie-Weiss model the random variable $\frac 1N \sum_{i=1}^N \sigma_i$ asymptotically concentrates in the points $m^+(\beta)$ and $-m^+(\beta)$. Note that $m^+(\beta)=0$, if and only if $\beta \le 1$.

A similar change in the behavior was shown to be true for the block spin Ising model, see \cite{BRS17_blockmodel}.
\begin{theorem}(cf.~\cite[Proposition 4.1]{BRS17_blockmodel})\label{theo1}
In the above setting assume that $|\alpha| \le \beta$ and denote by $\rho_{N,\alpha,\beta}$ the distribution of $m$ under the Gibbs measure $\mu_{N, \alpha, \beta} $. 
\begin{itemize}
\item If $\beta +|\alpha| \le 2$, then $\rho_{N,\alpha,\beta}$ weakly converges to the Dirac measure in $(0,0)$.
\item If $\beta +|\alpha| > 2$ and $\alpha=0$, then $\rho_{N,\alpha,\beta}$ weakly converges to the mixture of Dirac measures $\frac 1 4 \sum_{ s_1, s_2 \in \{-,+\}} \delta_{(s_1 m^+(\beta/2), s_2 m^+(\beta/2))}$.
\item If $\beta +\alpha > 2$ and $\alpha>0$, then $\rho_{N,\alpha,\beta}$ weakly converges to the mixture of Dirac measures:
$\frac 1 2 (\delta_{(m^+(\frac{\alpha+\beta}2),  m^+(\frac{\alpha+\beta}2))}+\delta_{(-m^+(\frac{\alpha+\beta}2),  -m^+(\frac{\alpha+\beta}2)}) $.
\item If $\beta +|\alpha| > 2$ and $\alpha<0$, then $\rho_{N,\alpha,\beta}$ weakly converges to the mixture of Dirac measures:
$\frac 1 2 (\delta_{(m^+(\frac{\beta-\alpha}2),  -m^+(\frac{\beta-\alpha}2))}+\delta_{(-m^+(\frac{\beta-\alpha}2),  m^+(\frac{\beta-\alpha}2)}) $.
\end{itemize}
\end{theorem}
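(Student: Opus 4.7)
My plan is to reduce the joint law of $m=(m_1,m_2)$ to an explicit two-dimensional density and apply Laplace asymptotics. Since the Hamiltonian depends on $\sigma$ only through $(m_1,m_2)$, the number of configurations with $(m_1(\sigma),m_2(\sigma))=(x,y)$ equals $\binom{N/2}{N(1+x)/4}\binom{N/2}{N(1+y)/4}$, and Stirling's formula yields, on the admissible lattice,
\begin{equation*}
\rho_{N,\alpha,\beta}(x,y)\;\propto\;\exp\bigl(-N\, F_{\alpha,\beta}(x,y)+o(N)\bigr),\qquad F_{\alpha,\beta}(x,y)\;:=\;\tfrac12\bigl(J(x)+J(y)\bigr)-\tfrac\beta8(x^2+y^2)-\tfrac\alpha4\,xy,
\end{equation*}
where $J(t):=\tfrac{1+t}{2}\log(1+t)+\tfrac{1-t}{2}\log(1-t)$ is even and strictly convex on $(-1,1)$, with $J(0)=0$, $J'=\artanh$, $J''(0)=1$.

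Next I would classify the global minimizers of $F_{\alpha,\beta}$. The critical-point equation $\nabla F_{\alpha,\beta}=0$ is exactly the coupled fixed-point system $x=\tanh\!\bigl(\tfrac{\beta x+\alpha y}{2}\bigr)$, $y=\tanh\!\bigl(\tfrac{\beta y+\alpha x}{2}\bigr)$, and the Hessian of $F_{\alpha,\beta}$ at the origin has eigenvalues $\tfrac12-\tfrac{\beta+\alpha}{4}$ along $(1,1)$ and $\tfrac12-\tfrac{\beta-\alpha}{4}$ along $(1,-1)$. Hence $(0,0)$ is a strict local minimum iff $\beta+|\alpha|<2$. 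Restricting to the diagonal $x=y$ reduces the system to the scalar Curie--Weiss self-consistency $m=\tanh\bigl(\tfrac{\beta+\alpha}{2}\,m\bigr)$, whose symmetric nontrivial solutions $\pm m^+\bigl(\tfrac{\beta+\alpha}{2}\bigr)$ become global minimizers as soon as $\beta+\alpha>2$; the anti-diagonal $x=-y$ treats $\alpha<0$ analogously, giving $\pm m^+\bigl(\tfrac{\beta-\alpha}{2}\bigr)$; and at $\alpha=0$ the density factorizes into two independent Curie--Weiss systems at inverse temperature $\beta/2$. Since $F_{\alpha,\beta}(-x,-y)=F_{\alpha,\beta}(x,y)$ (with an extra block-flip symmetry when $\alpha=0$), each minimizer carries equal mass. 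Laplace's method then upgrades exponential concentration to weak convergence of $\rho_{N,\alpha,\beta}$ to the stated Dirac mixtures in the three supercritical cases.

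The main obstacle is the critical boundary $\beta+|\alpha|=2$, which belongs to the first (Dirac-at-origin) case: exactly one Hessian eigenvalue at $(0,0)$ vanishes, so the standard quadratic Laplace bound degenerates. I would handle this via the Taylor expansion $J(t)=t^2/2+t^4/12+O(t^6)$, which when inserted into $F_{\alpha,\beta}$ yields a strictly positive quartic lower bound in the degenerate direction (along $(1,\sgn\alpha)$, or in every direction when $\alpha=0$ and $\beta=2$). Combined with the strict positivity $F_{\alpha,\beta}>0$ away from $(0,0)$ (which follows from convexity of $J$ and the assumption $|\alpha|\le\beta$), this is enough to push $\rho_{N,\alpha,\beta}(\{\|m\|>\varepsilon\})\to 0$ for every $\varepsilon>0$ and hence establish weak convergence to $\delta_{(0,0)}$. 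The remaining supercritical cases reduce to routine saddle-point expansions around the explicit non-degenerate minima.
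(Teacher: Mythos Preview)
The paper does not prove this theorem itself; it is quoted from \cite[Proposition~4.1]{BRS17_blockmodel} as background for the phase diagram, so there is no proof in the present text to compare your argument against.

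Your large-deviations plan is the standard route and is essentially correct: the rate function $F_{\alpha,\beta}$, the fixed-point system, the Hessian eigenvalues at the origin, and the handling of the degenerate direction at $\beta+|\alpha|=2$ via the quartic term of $J$ are all accurate. The one genuine gap in the outline is the passage from ``critical points on the (anti-)diagonal'' to ``\emph{global} minimizers of $F_{\alpha,\beta}$ on $(-1,1)^2$''. You restrict to $x=y$ (respectively $x=-y$) and solve, but you do not argue why no off-axis critical point can compete, nor --- when $\beta>2$, so that both the diagonal and the anti-diagonal carry nontrivial Curie--Weiss solutions --- why the sign of $\alpha$ selects one pair over the other. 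The latter follows from monotonicity of the Curie--Weiss free energy in the inverse temperature (comparing $\tfrac{\beta+\alpha}{2}$ with $\tfrac{\beta-\alpha}{2}$); ruling out off-axis minimizers requires a short additional argument. Both steps are routine, but they are the actual content of the supercritical classification and should at least be named.

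As an aside, the paper's own Theorem~\ref{crit_fluc} treats the critical line $\beta+|\alpha|=2$ via the Hubbard--Stratonovich transform rather than a direct entropy expansion; that method delivers the sharper $N^{1/4}$-scale fluctuation law, whereas your quartic lower bound recovers only the law-of-large-numbers statement $\rho_{N,\alpha,\beta}\Rightarrow\delta_{(0,0)}$, which is all Theorem~\ref{theo1} asserts there.
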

Theorem \ref{theo1} can be considered as a  Law of Large Numbers for $m$. Central Limit Theorems for $m$ were proved in \cite{LSblock1}.

They are also very useful for understanding the following reconstruction result, even though the authors in \cite{BRS17_blockmodel} choose a different approach. We will come back to this point when having described the reconstruction mechanism.

In the major part of their work \cite{BRS17_blockmodel} Berthet et al.~consider the question, whether with a given number of observations $n$ one can reconstruct $S$ exactly, and, if so, how $n$ relates to $N$. One of their main findings is
\begin{theorem}\label{gen_recov} (cf.~\cite[Corollary 4.6]{BRS17_blockmodel})
If the parameters $\alpha$ and $\beta$ satisfy $|\alpha| \le \beta$, $\alpha < \beta$, and $|\alpha|+\beta \neq 2$, then there exist positive constants $C_1$ and $C_2$ that depend
on $\alpha$ and $\beta$ such that there is an algorithm that
recovers the block structure $(S, S^c)$ exactly with probability $1-\delta$ whenever
\begin{enumerate}
\item $n \ge C_1 N \log(N / \delta)$ if $\alpha>0$ or  $|\alpha|+\beta < 2$ or
\item $n \ge C_2 \log(N/ \delta)$, otherwise,
\end{enumerate}
where $n$ denotes the number of observations.
\end{theorem}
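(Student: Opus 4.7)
The plan is to reduce Theorem~\ref{gen_recov} to a concentration argument on the empirical pairwise correlation matrix $\hat M_{ij} = \frac1n\sum_{k=1}^n \sigma_i^{(k)}\sigma_j^{(k)}$ computed from $n$ independent samples drawn from $\mu_{N,\alpha,\beta}$. By permutation invariance within each block, the population matrix $M_{ij} = \E[\sigma_i\sigma_j]$ takes only two values off the diagonal, $a_N$ when $i\sim j$ and $b_N$ when $i\not\sim j$, so exact recovery of $(S,S^c)$ amounts to correctly labeling every off-diagonal entry. Using $\sum_{i\in S}\sigma_i = (N/2)m_1$ and the analogue for $S^c$, a direct computation gives $a_N = \E[m_1^2] + O(1/N)$ and $b_N = \E[m_1 m_2]$, so the decisive quantity is the gap $\Delta_N := a_N - b_N = \E[m_1^2] - \E[m_1 m_2] + O(1/N)$.

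The size of $\Delta_N$ in each non-critical regime of Theorem~\ref{theo1} is the crux. (i) In the subcritical region $|\alpha|+\beta<2$, $m\to(0,0)$ and the CLT of \cite{LSblock1} gives $N\,\E[m_1^2]\to s$, $N\,\E[m_1 m_2]\to t$, so $\Delta_N=\Theta(1/N)$, with $s\ne t$ thanks to the hypothesis $\alpha<\beta$. (ii) In the ferromagnetic low-temperature region $\alpha>0$, $\alpha+\beta>2$, both $\E[m_1^2]$ and $\E[m_1 m_2]$ tend to $(m^+((\alpha+\beta)/2))^2$ and the difference is again $\Theta(1/N)$ via the pure-phase CLT. (iii) In the antiferromagnetic (or $\alpha=0$) low-temperature region, writing $m^* = m^+((\beta-\alpha)/2)>0$, we have $\E[m_1^2]\to(m^*)^2$ while $\E[m_1 m_2]\to -(m^*)^2$ for $\alpha<0$ or $0$ for $\alpha=0$, producing the constant gap $\Delta_N=\Theta(1)$. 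Regimes (i) and (ii) together yield case~(1) of the theorem; regime (iii) yields case~(2).

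Two different algorithms complete the proof. For case~(2), pairwise thresholding works: Hoeffding and a union bound give
\begin{equation*}
\P\Big(\max_{i\ne j}|\hat M_{ij}-M_{ij}|>\Delta_N/3\Big)\le N^2\,e^{-n\Delta_N^2/18},
\end{equation*}
so thresholding $\hat M_{ij}$ at the midpoint $(a_N+b_N)/2$ recovers $(S,S^c)$ once $n\ge C_2\log(N/\delta)$. For case~(1), the entrywise approach demands $n\gtrsim N^2\log N$, so I would switch to a spectral step: let $P = I - \ind\ind^T/N$, and define $\hat v$ as the leading eigenvector of $P\hat M P$; declare $\hat S=\{i:\hat v_i>0\}$. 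In expectation $P M P$ has top eigenvalue $(1-a_N)+\tfrac{N}{2}\Delta_N=\Theta(1)$ in the direction $v=\ind_S-\ind_{S^c}$, separated from a bulk of size $1-a_N$. Matrix Bernstein applied to the rank-one centered summands $P\sigma^{(k)}(\sigma^{(k)})^T P$ (whose variance proxy has operator norm $O(N)$) gives $\|P\hat M P - PMP\|_\mathrm{op}=O(\sqrt{N\log(N/\delta)/n})$, and Davis-Kahan combined with the full delocalization of $v$ across each block transfers this into an $\ell_\infty$ bound sharp enough for exact sign recovery as soon as $n\ge C_1 N\log(N/\delta)$.

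The main obstacle is the quantitative spectral analysis in case~(1): the signal eigenvalue is only of constant order, the spectral gap is borderline, and matrix concentration has to be sharp in both operator and entrywise $\ell_\infty$ norms to survive Davis-Kahan. Everything hinges on the CLT constants of \cite{LSblock1} keeping $s-t$ (and its pure-phase analogue) bounded away from $0$ uniformly on compact subsets of the open region $\{|\alpha|\le\beta,\,\alpha<\beta,\,|\alpha|+\beta\ne 2\}$, which is precisely what the hypotheses of the theorem isolate by excluding both the critical line $|\alpha|+\beta=2$ and the degenerate line $\alpha=\beta$.
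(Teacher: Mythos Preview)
The paper does not prove Theorem~\ref{gen_recov} itself; it is quoted from \cite{BRS17_blockmodel}, and Section~2 merely summarises the approach taken there. That approach is \emph{not} spectral: the maximum-likelihood problem is rewritten as $\max_{R\in\mathcal R}\tr(\hat\Gamma R)$, then relaxed to the elliptope $\mathcal E^+$ of positive semidefinite matrices with unit diagonal, and the core of the argument (Theorem~\ref{helptheo}) is a dual-certificate calculation showing that this SDP relaxation is tight, with sample complexity $n\gtrsim \log(N/\delta)/(Z-Z')$. Estimating $Z-Z'$ in the various regimes then gives the two cases of the theorem. Your identification of the gap $\Delta_N=a_N-b_N=Z-Z'$ and its order in each regime is exactly the information the paper uses, and your case~(2) thresholding argument is fine and essentially equivalent to what one would do there.

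Your case~(1), however, has a genuine gap. With $n=C_1N\log(N/\delta)$ the matrix-Bernstein bound gives $\|P\hat M P-PMP\|_{\mathrm{op}}=O(1/\sqrt{C_1})$, and since the spectral gap of $PMP$ is $\frac N2\Delta_N=\Theta(1)$, Davis--Kahan only yields $\|\hat v-v/\sqrt N\|_2=O(1/\sqrt{C_1})$: a small \emph{constant}, not $o(1)$. The entries of the normalised signal vector are $\pm 1/\sqrt N$, so an $\ell_2$ error of constant order permits $\Theta(N)$ sign errors; to force $\|\hat v-v/\sqrt N\|_\infty<1/\sqrt N$ from Davis--Kahan alone you would need $n\gtrsim N^2\log N$. ``Delocalisation of $v$'' does not by itself convert an $\ell_2$ bound into an $\ell_\infty$ bound of the required strength; one needs either a leave-one-out argument, an entrywise eigenvector perturbation theorem of Abbe--Fan--Wang--Zhong type, or a second-stage refinement step---none of which you invoke. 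This is precisely why \cite{BRS17_blockmodel} works with the SDP relaxation: the dual-certificate argument needs only operator-norm concentration and sidesteps the $\ell_\infty$ issue entirely, delivering the rate $n\asymp N\log N$ directly.
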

There are two regimes of parameters excluded by Theorem \ref{gen_recov}. The first is $\alpha = \beta$. While one can still prove limit theorems in this case (see \cite{werner_curie_weiss}), it is rather obvious that it is impossible to reconstruct $S$ in this case: The interaction simply does not differentiate between spins in the same block and spins in different blocks.

Another obvious question left open by Theorem \ref{gen_recov} is, what happens at the critical line $|\alpha|+\beta=2$. The purpose of this note is to fill this gap. We will show:

\begin{theorem}\label{Crit_recov}
If the parameters $\alpha$ and $\beta$ satisfy $\alpha <\beta$, $\alpha \neq 0$, and $|\alpha|+\beta = 2$, then
the following holds true.
\begin{itemize}
\item If $\alpha >0$, there exists a positive constant $C_3$ such that there is an algorithm that
recovers the block structure $(S, S^c)$ exactly with probability $1-\delta$ whenever
$$
n \ge C_3 N \log(N / \delta).
$$
\item If $\alpha <0$, there exists a positive constant $C_4$ such that there is an algorithm that
recovers the block structure $(S, S^c)$ exactly with probability $1-\delta$ whenever
$$
n \ge C_4 \sqrt N \log(N / \delta).
$$
\end{itemize}
Here, $n$ denotes the number of observations.
\end{theorem}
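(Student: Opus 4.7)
We follow the recovery strategy of \cite{BRS17_blockmodel}: from $n$ i.i.d.\ samples the algorithm estimates the pair correlations $C_{ij}:=\mu_{N,\alpha,\beta}(\sigma_i\sigma_j)$ and succeeds as soon as the empirical correlations resolve the gap $\Delta := c_{\mathrm{in}}-c_{\mathrm{out}}$ between the two values $C_{ij}$ takes on, by permutation symmetry within the blocks, for $i\neq j$. The whole argument reduces to quantifying $\Delta$ on the critical line and invoking the concentration-plus-union-bound part of \cite{BRS17_blockmodel}.

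\emph{Step 1 (gap in terms of block magnetisations).} Rewriting the pair correlations through $m_1,m_2$ and using $\mathbb{E}_\mu[m_1^2]=\mathbb{E}_\mu[m_2^2]$ one finds
\begin{equation*}
\Delta \;=\; \tfrac{1}{2}\,\mathbb{E}_\mu\bigl[(m_1-m_2)^2\bigr] \;+\; O(1/N),
\end{equation*}
so the problem reduces to identifying the order of $\mathbb{E}_\mu[(m_1-m_2)^2]$ on the critical line.

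\emph{Step 2 (critical fluctuations).} Working in the decoupling variables $u=(m_1+m_2)/2$, $v=(m_1-m_2)/2$ diagonalises the quadratic form in $H_{N,\alpha,\beta,S}$; after a Hubbard-Stratonovich transform the partition function splits into two essentially independent one-dimensional Laplace integrals, with effective inverse temperatures $(\alpha+\beta)/2$ for $u$ and $(\beta-\alpha)/2$ for $v$. On the critical line exactly one of these equals $1$ while the other is strictly less than $1$:
\begin{itemize}
\item If $\alpha>0$ and $\alpha+\beta=2$, the $u$-mode is critical (quartic scaling limit) and the $v$-mode is subcritical (Gaussian), giving $\mathbb{E}_\mu[(m_1-m_2)^2]\asymp 1/N$.
\item If $\alpha<0$ and $\beta-\alpha=2$, the roles of $u$ and $v$ swap and one obtains $\mathbb{E}_\mu[(m_1-m_2)^2]\asymp 1/\sqrt N$ from the critical Curie-Weiss scaling in the $v$-direction.
\end{itemize}
Both estimates follow from a Laplace expansion around the critical point of the relevant effective potential; uniform integrability upgrades the weak limits (in the spirit of \cite{LSblock1}) to convergence of second moments.

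\emph{Step 3 (back to the algorithm).} Feeding the two orders $\Delta\asymp 1/N$ and $\Delta\asymp 1/\sqrt N$ into the concentration argument of \cite{BRS17_blockmodel} (Hoeffding over the $n$ samples plus a union bound over the $O(N^2)$ pairs, together with the aggregation step that lets the algorithm exploit the permutation symmetry within each putative block) yields exactly the announced sufficient sample sizes $n\gtrsim N\log(N/\delta)$ for $\alpha>0$ and $n\gtrsim \sqrt N\log(N/\delta)$ for $\alpha<0$.

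\emph{Main obstacle.} The non-routine work is Step 2. Away from criticality $\mathbb{E}_\mu[(m_1-m_2)^2]=O(1/N)$ drops out of a standard Gaussian CLT, but on the critical line the mean-field Hessian degenerates in exactly one of the two modes $u,v$, and one has to push the Laplace / Hubbard-Stratonovich analysis through the quartic, non-Gaussian limit in the critical direction while retaining sharp (not merely weak) moment control in the orthogonal, non-critical direction. This is precisely the critical-line extension of the CLTs of \cite{LSblock1}, and constitutes the technical heart of the proof.
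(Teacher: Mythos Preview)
Your proposal is correct and follows essentially the same route as the paper: both reduce the question to the size of the correlation gap $Z-Z'$ (your $\Delta$), extract it from the second moment of $m_1-m_2$, establish the critical fluctuation results for $m_1\pm m_2$ via a Hubbard--Stratonovich / Laplace analysis in the diagonalising variables (your Step~2 is exactly the paper's Theorem~\ref{crit_fluc}), and then invoke the recovery machinery of \cite{BRS17_blockmodel} (the paper uses their SDP result, Theorem~\ref{helptheo}, rather than a bare Hoeffding-plus-union-bound description, but the input is the same: $n\gtrsim \log(N/\delta)/(Z-Z')$).

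One point in your write-up deserves more care. In the $\alpha>0$ case the identity in Step~1 reads, precisely,
\[
Z-Z' \;=\; \tfrac12\,\E_\mu\!\bigl[(m_1-m_2)^2\bigr]\;-\;\tfrac{2}{N}(1-Z),
\]
and here \emph{both} terms on the right are of order $1/N$. Thus knowing only $\E_\mu[(m_1-m_2)^2]\asymp 1/N$ does not yet yield $\Delta\asymp 1/N$; the two contributions could in principle cancel. You need the exact limiting variance $\tfrac{N}{4}\E_\mu[(m_1-m_2)^2]\to \tfrac{2}{2-(\beta-\alpha)}$ from the Gaussian CLT for the $v$-mode (together with uniform integrability, which you correctly flag) and the explicit form of the remainder to see that
\[
Z-Z' \;\sim\; \frac{2}{N}\Bigl(\frac{2}{2-(\beta-\alpha)}-1\Bigr)\;=\;\frac{2(\beta-\alpha)}{N\,(2-(\beta-\alpha))}\;>\;0.
\]
The paper carries out exactly this constant-tracking in its proof of Theorem~\ref{Crit_recov}. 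For $\alpha<0$ there is no such issue, since the main term $\asymp 1/\sqrt N$ dominates the $O(1/N)$ remainder.
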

\begin{rem}
Note that the number $n$ of necessary observations indicates that the phase one enters at the phase transition point $\alpha >0$ and $\alpha+\beta=2$ is of a different nature than the phase one enters at $\alpha <0$ and $|\alpha|+\beta=2$ -- even though both points still belong to the high temperature regime according to Theorem \ref{theo1}. We exclude the case $\alpha=0$, because it is already covered by Theorem \ref{gen_recov}.
\end{rem}
\begin{rem}
Following the proof in \cite{BRS17_blockmodel} one can see that the constants $C_1$ and $C_3$ in the above Theorems \ref{gen_recov} and \ref{Crit_recov} depend on $\alpha$ and $\beta$ and explode when $\alpha$ tends to $\beta$. We will not elaborate on this point.
\end{rem}

The rest of this note is devoted to the proof of Theorem \ref{Crit_recov}. To this end we will quickly recap the general reconstruction approach from \cite{BRS17_blockmodel} in Section 2. Section 3 will generalize a result on the critical fluctuations of $m_1+m_2$ and $m_1-m_2$ from \cite{LSblock1}. In particular, we will treat the case of negative $\alpha$ which was omitted in \cite{LSblock1}. These two ingredients will yield the proof of Theorem \ref{Crit_recov}, which will be given in Section 4.

\section{The strategy for block recovery}
While part 2 of Theorem \ref{gen_recov} could, in principle, be shown using a large deviations estimate, the first part needs some more sophisticated arguments.
In this section we will recall how the approach proposed in \cite{BRS17_blockmodel} by Berthet et al.~works. We will be a bit brief and especially refer the reader to \cite{BRS17_blockmodel} for proofs.

Given $n$ observations $\sigma^{(1)}, \ldots, \sigma^{(n)}$ the log-likelihood function for $S$ is given by
$$
\mathcal{L}(S)= - n \log Z_{N,\alpha, \beta}(S)-\sum_{k=1}^n H_{N,\alpha, \beta, S}(\sigma^{(k)}).
$$
Since it is easily seen that $Z_{N, \alpha,\beta}$, as a sum over all configurations $\sigma$, is independent of $S$, because we know its size, maximizing
$\mathcal{L}(S)$ amounts to minimizing $\sum_{k=1}^n H_{N,\alpha, \beta, S}(\sigma^{(k)})$. On the other hand, taking the $N \times N$ matrix $Q$ with elements $Q_{ij}= \frac{\beta}N$, if $i \sim j$ and $Q_{ij}= \frac{\alpha}N$, otherwise , one readily sees that $H_{N,\alpha, \beta, S}(\sigma)=-\frac 12 \mathrm{Tr}(\sigma \sigma^T Q)$ (where the upper index $T$ indicates transposition). Note that $Q$ depends on $S$.

Thus finding the maximum likelihood estimator for $S$ amounts to
maximizing $\frac 12 \mathrm{Tr}(\hat \Sigma Q)$ or $\mathrm{Tr}(\hat \Sigma Q)$, where $\hat \Sigma=\frac 1 n \sum_{k=1}^n \sigma^{(k)} {\sigma^{(k)}}^T$ is the empirical covariance matrix of the observations.

Here, we maximize over all matrices $Q$ that induce a  bisection of $\{1, \ldots N\}$ into sets $S$ and $S^c$ of equal size. More precisely, the $N\times N$ matrix $Q=(Q_{ij})_{1 \leq i,j \leq N}$ has to satisfy the following properties: $Q$ is symmetric, $Q_{ij}\in \{\frac{\beta}{N}, \frac{\alpha}{N}\}$, the diagonal entries are equal to $\frac{\beta}{N}$, $Q_{ij}=Q_{jk}=\frac{\beta}{N}$ implies $Q_{ik}=\frac{\beta}{N}$ for all $i,j,k \in \{1, \ldots N\}$ and for each $i=1, \ldots, N$ we have $|\{j: Q_{ij}=\frac{\beta}{N}\}|=\frac{N}{2}$.

Now, for each fixed $\sigma$ the function $\mathrm{Tr}(\sigma {\sigma}^T Q)$ is maximized by the same set $S$ (associated to the matrix $Q$) no matter, what $\alpha$ and $\beta$ are, as long as $\alpha < \beta$. The same holds true for $n$ fixed observations $\sigma^{(1)}, \ldots, \sigma^{(n)}$ and $\mathrm{Tr}(\hat \Sigma Q)$.

Indeed, it is an easy matter to check that
$$
\frac {2n} {N}\mathrm{Tr}(\hat \Sigma Q)=\sum_{k=1}^n  \alpha m_1 (\sigma^{(k)}) m_2(\sigma^{(k)}) +\frac \beta 2 ((m_1(\sigma^{(k)}))^2+ (m_2(\sigma^{(k)}))^2).
$$
As $\sigma^{(1)}, \ldots, \sigma^{(n)}$ are fixed so is $\frac 2 N \sum_{i=1}^N \sigma_i^{(k)}=:c_k$. This means we have that $$m_2(\sigma^{(k)})= c_k - m_1(\sigma^{(k)})$$ for all $k$ and constants $c_k$ depending on $k$ (of course), but not depending on the partitioning $(S,S^c)$. Thus
given our $n$ observations $\sigma^{(1)}, \ldots, \sigma^{(n)}$ we compute
\begin{align*}
&\frac {2n} {N}\mathrm{Tr}(\hat \Sigma Q)
\\
& \quad = \alpha \sum_{k=1}^n (m_1(\sigma^{(k)}))(c_k-m_1(\sigma^{(k)}))+\frac{\beta}{2}\sum_{k=1}^n (m_1(\sigma^{(k)}))^2+(c_k-m_1(\sigma^{(k)}))^2).
\end{align*}
At first glance the right hand side may appear rather tricky, because even though the observations $\sigma^{(1)}, \ldots, \sigma^{(n)}$ are taken independently, the $m_1(\sigma^{(k)})$ are not, when $S$ is the free variable. However, multiplying out the products (resp.~the square) and rearranging the terms we see that
$$
\frac {2n} {N}\mathrm{Tr}(\hat \Sigma Q)=K+ (\beta-\alpha)\sum_{k=1}^n (m_1(\sigma^{(k)}))^2- (\beta-\alpha)\sum_{k=1}^n c_k m_1(\sigma^{(k)})
$$
for some constant $K$ depending on $\beta$ and $\sigma^{(1)}, \ldots, \sigma^{(n)}$ but not on the choice of $S$.
This reveals that the minimal and maximal points of $\frac {2n} {N}\mathrm{Tr}(\hat \Sigma Q)$ as a function of $S$ do not depend on $\alpha$ and $\beta$ as long as $\beta \neq \alpha$. Only, whether they are maxima or minima depends on whether $\alpha < \beta$ or $\alpha >\beta$.
Hence, as long as we keep $\alpha <  \beta$ we can choose any values for them we like.

We can therefore also set $\beta=N$ and $\alpha=-N$. This transforms our optimization problem into
\begin{equation}\label{max1}
\max_{R \in \mathcal{R}} \mathrm{Tr}(\hat \Sigma R) \qquad \mbox{with } \mathcal{R}=\{R= r r^T: r \in \{\pm 1\}^N: \sum_{i=1}^N r_i=0 \}
\end{equation}
(cf.~(3.2) in \cite{BRS17_blockmodel}).
Obviously each $R=rr^T \in \mathcal{R}$ again induces a bisection of $\{1, \ldots, N\}$ into two sets $S$ and $S^c$ of equal size, where $i\sim j$, if $r_i=r_j$. Moreover,  there is a one-to-one correspondence between the set of valid matrices $Q$ and $\mathcal{R}$; e.g.~given some  matrix $Q$ with the aforementioned properties, the corresponding matrix $R=rr^T \in \mathcal R$ is given by $r_i =1$, if $Q_{1i}=\frac{\beta}{N}$ and $r_i=-1$ otherwise. Thus each $R \in \mathcal{R}$ is an estimator for the unknown blocks.

In \cite{BRS17_blockmodel} the authors now proceed in two steps. First the $\sigma$ are centered, i.e.~$\sigma \in \{\pm 1\}^N$ is replaced by $\overline \sigma:= \Pi \sigma$ with $\Pi:= \mathrm{Id}_N- \frac 1 N \mathds{1}_N$, where $\mathds{1}_N$ is the
$N \times N$ matrix with all elements equal to $1$. Since for all $R \in \mathcal{R}$, we have that $\tr[\hat \Sigma R ] = \tr [\hat \Gamma R]$, where $\hat \Gamma= \Pi \hat \Sigma \Pi$ the likelihood function remains the same
over $\mathcal{R}$ when we replace $\hat \Sigma$ by $\hat \Gamma.$
The decisive step in \cite{BRS17_blockmodel} is then to embed the optimization problem \eqref{max1} into a larger class of optimization problems. Hence, instead of solving \eqref{max1} the authors look for solutions of
\begin{equation}\label{max2}
\max_{R \in \mathcal{E}^+} \mathrm{Tr}(\hat \Gamma R)
\end{equation}
with
\begin{align*}
 \mathcal{E}^+:=&\{ R: R  \text{ is a positive semidefinite, symmetric } N \times  N \text{ matrix}
\\
& \quad \text{ with } 1 \text{ on the diagonal}\} \nonumber.
\end{align*}
The question is, of course, when a solution of \eqref{max2} also provides a solution to \eqref{max1}. This is, where the authors in \cite{BRS17_blockmodel} spend a considerable amount of work to show that:

\begin{theorem}\label{helptheo}(cf.~\cite{BRS17_blockmodel}, Theorem 3.3)

The semidefinite programming problem \eqref{max2} has a unique maximum at $R^*\in \mathcal{E}^+$
with probability $1-\delta$ whenever
$$ n > C_{\alpha, \beta} \frac{\log(4 N/\delta)}{Z-Z'}(1+o_p(1)).
$$
Here $Z:= \E(\sigma_i \sigma_j)$, if $i \sim j$, $Z':= \E(\sigma_i \sigma_j)$, if $i \not\sim j$, and $C_{\alpha, \beta}$ is a constant depending on $\alpha$ and $\beta$.

Moreover, this unique solution of \eqref{max2} is of the form $R^*=r_S r_S^T$, where $r_S :=  \mathds{1}_S- \mathds{1}_{S^c}$ for a set $S$ with cardinality $\frac N2$. Thus $R^* \in \mathcal{R}$.
In particular, the semidefinite programming solution, if it exists, recovers exactly the block structure $(S, S^c)$.
\end{theorem}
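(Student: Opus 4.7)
The plan is to verify that $R^\ast = r_S r_S^T$ is the unique maximizer of the SDP \eqref{max2} by exhibiting an explicit \emph{dual certificate}. Since $R^\ast$ is clearly primal feasible (rank one, hence PSD, with unit diagonal because $(r_S)_i \in \{\pm 1\}$), the KKT/duality conditions for
\begin{equation*}
\max\ \tr(\hat\Gamma R) \quad \text{s.t.}\ R \succeq 0,\ R_{ii}=1
\end{equation*}
reduce to producing a diagonal matrix $D = \mathrm{diag}(\lambda_1,\ldots,\lambda_N)$ such that: (i) $D - \hat\Gamma \succeq 0$ (dual feasibility); (ii) $(D-\hat\Gamma)r_S = 0$ (complementary slackness); (iii) $\ker(D-\hat\Gamma) = \mathrm{span}(r_S)$ (uniqueness). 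Condition~(ii) forces the choice $\lambda_i := (r_S)_i (\hat\Gamma r_S)_i$, so it only remains to verify the two spectral statements (i) and (iii) with probability at least $1-\delta$.

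I would first analyse the mean. Writing $\hat\Gamma = \Pi \hat\Sigma \Pi$, using $\Pi J \Pi = 0$ (where $J$ is the all-ones matrix), $\Pi r_S = r_S$ (since $\sum_i (r_S)_i = 0$), and decomposing the ``same-block'' indicator matrix as $(J + r_S r_S^T)/2$, a short calculation yields
\begin{equation*}
\E \hat\Gamma \;=\; (1-Z)\,\Pi + \tfrac{Z-Z'}{2}\, r_S r_S^T.
\end{equation*}
Hence $\E\hat\Gamma$ has $r_S$ as its top eigenvector with eigenvalue $(1-Z)+N(Z-Z')/2$, the vector $\mathbf 1$ as a zero eigenvector, and acts as $(1-Z)\,\mathrm{Id}$ on the remaining $N-2$ directions. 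Consequently, in expectation, $D-\hat\Gamma$ has kernel exactly $\mathrm{span}(r_S)$ and its second smallest eigenvalue is of order $N(Z-Z')/2$. This spectral gap is precisely what drives the appearance of the factor $Z-Z'$ in the denominator of the sample-complexity bound.

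It remains to show that the sample fluctuations $\hat\Gamma - \E\hat\Gamma$ are small enough, with probability $1-\delta$, to preserve positive semidefiniteness and the rank of $D-\hat\Gamma$. Since $\sigma^{(1)},\ldots,\sigma^{(n)}$ are independent, $\hat\Sigma = n^{-1}\sum_k \sigma^{(k)}{\sigma^{(k)}}^T$ is an average of $n$ independent bounded rank-one PSD matrices and one can invoke matrix concentration. The main obstacle is obtaining the \emph{sharp} scaling $n \gtrsim \log(N/\delta)/(Z-Z')$ rather than the weaker $(Z-Z')^{-2}$ bound that a crude matrix Bernstein estimate of $\|\hat\Gamma - \E\hat\Gamma\|_{\mathrm{op}}$ would give. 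The key observation, as in \cite{BRS17_blockmodel}, is that one does not need to control the full operator norm but only positivity on the $(N-1)$-dimensional subspace orthogonal to $r_S$: the scalar quantities $v^T(\hat\Gamma-\E\hat\Gamma)w$ needed for the argument (in particular the entries of $\hat\Gamma r_S$, which govern the $\lambda_i$, and quadratic forms on coordinate vectors) concentrate much better by one-dimensional Hoeffding/Bernstein bounds, and a union bound over the $N$ coordinates then yields the claimed rate. Combining these estimates with the mean-value analysis above shows that $D-\hat\Gamma$ is PSD with exact kernel $\mathrm{span}(r_S)$, establishing both the optimality and uniqueness of $R^\ast$ in \eqref{max2}.
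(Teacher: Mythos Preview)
The paper does not prove Theorem~\ref{helptheo} at all: it is quoted verbatim from \cite{BRS17_blockmodel} (their Theorem~3.3) and simply used as a black box. The preceding paragraph even says explicitly that ``the authors in \cite{BRS17_blockmodel} spend a considerable amount of work to show'' this result, and the paper refers the reader there for the proof. So there is no proof in the paper to compare your attempt against.

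That said, your sketch is broadly faithful to the strategy Berthet, Rigollet and Srivastava actually use: a dual-certificate argument with $\lambda_i = (r_S)_i(\hat\Gamma r_S)_i$, the mean computation $\E\hat\Gamma = (1-Z)\Pi + \tfrac{Z-Z'}{2} r_S r_S^T$ (which is correct), and a concentration step to transfer the spectral gap of order $N(Z-Z')$ to the empirical matrix. The one place where your outline is thin is precisely the concentration step. Saying ``one does not need to control the full operator norm'' and invoking scalar Hoeffding with a union bound over $N$ coordinates is not enough as written: to certify $D-\hat\Gamma \succeq 0$ on the full $(N-1)$-dimensional orthogonal complement of $r_S$ you still need an operator-norm type bound, not merely control of finitely many coordinate directions. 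In \cite{BRS17_blockmodel} this is handled by a matrix Bernstein/Tropp inequality applied to carefully chosen centered summands whose variance proxy scales with $Z-Z'$ rather than with $1$; that is what produces the linear dependence on $(Z-Z')^{-1}$ instead of $(Z-Z')^{-2}$. Your paragraph gestures at this but does not explain why the variance is that small, which is the crux of the argument.
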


\section{A limit theorem}
Theorem \ref{helptheo} obviously asks for an estimate of $Z-Z'$. As a matter of fact, in \cite{BRS17_blockmodel} the authors show by a comparison of the distribution of $(m_1, m_2)$ with a Gaussian distribution that for $\alpha \le 0$ and $|\alpha| + \beta >2$, the difference $Z-Z'$ is of constant order in $N$, while it is of order $\frac 1 N$, in all other cases, whenever $|\alpha|+\beta \neq 2$. Indeed, for $\alpha >0$ and $\alpha + \beta <2$ this also easily follows from the Central Limit Theorem 1.2 for the vector $m$ in \cite{LSblock1} and it would follow from corresponding Central Limit Theorems also in the other cases when $|\alpha|+\beta \neq 2$, if such were proven (they are quite likely true, because similar Central Limit Theorems hold for the vector $\frac 1N \sum_{i=1}^N \sigma_i$ in the Curie-Weiss model, see e.g.~\cite{werner_momente}).

For $|\alpha|+\beta = 2$ there is no such estimate for $Z-Z'$ in \cite{BRS17_blockmodel}. Indeed, in view of Theorem \ref{crit_fluc} below the main tool in this reference does not work, because at least for $\alpha>0$, and $\alpha+\beta=2$ the vector $\sqrt N(m_1, m_2)$ is not asymptotically Gaussian. Moreover, considering Theorem 6 and Remark 7 in \cite{werner_curie_weiss_crit} one may wonder, whether an exact reconstruction of $(S, S^c)$ on the basis of the correlations between the spins is possible at all. There, the authors show that for $\alpha>0$ and $\alpha+\beta=2$ asymptotically:
$$
\E(\sigma_i \sigma_j)= \sqrt{\frac{12}{N}} \frac{\Gamma(\frac 34)}{\Gamma(\frac 14)} \qquad \mbox{for }i\neq j
$$
{\it independent} of whether the sites $i$ and $j$ belong to the same block, or to different blocks. Hence, asymptotically the correlations $\E(\sigma_i \sigma_j)$ at $\alpha+\beta=2$ do not depend on whether $i \sim j$ or $i \not \sim j$, nor do they depend on $\alpha$ and $\beta$.

In the sequel we generalize a theorem from \cite{LSblock1} to analyze the fluctuations of $(m_1, m_2)$ for all $|\alpha| < \beta$ with $|\alpha|+\beta=2$. This will help us to prove Theorem \ref{Crit_recov}. Indeed, at
$|\alpha|+\beta= 2$ the following holds:

\begin{theorem}\label{crit_fluc}
For the parameters of the block spin Ising model $\alpha$ and $\beta$ assume that $|\alpha| < \beta$ and that $|\alpha|+\beta=2$.
\begin{enumerate}
\item[(a)]Then, if $\alpha>0$
on a scale $\sqrt N/2$ the difference between $ m_1$ and $m_2$, i.e.~$\tilde m_1 - \tilde m_2:= \frac{\sqrt N}{2} (m_1 - m_2)$, is asymptotically Gaussian with mean $0$ and variance $\frac 2 {2-(\beta-\alpha)}$.

If $\alpha<0$,
$\frac{N^{\frac 14}}{2} (m_1-m_2)$ converges in distribution to a probability measure $\rho$ on $\R$, which is absolutely continuous with Lebesgue-density
$$g(x) = \exp\left(-\frac{1}{12} x^4\right)/K,$$ where $K$ is a normalizing constant to make $\rho$ a probability measure.
\item[(b)] For the overall spin $m_1+m_2$, we have the following:  If $\alpha>0$  there is a non-standard Central Limit  Theorem, i.e.~$\frac{N^{\frac{1}{4}}}{2}(m_1+m_2)$ converges in distribution to the probability measure $\rho$ given in part (a) of the theorem. If $\alpha<0$,  there is a Gaussian Central Limit Theorem, i.e. $\frac{\sqrt{N}}{2}(m_1+m_2)$ converges in distribution to a Gaussian with mean zero and variance $-\frac{1}{\alpha}$.
\end{enumerate}
\end{theorem}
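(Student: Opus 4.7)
My approach combines an exact density representation of $(m_1, m_2)$ with a Laplace/Taylor analysis in the orthogonal coordinates $u := m_1 + m_2$ and $v := m_1 - m_2$. Since the Hamiltonian depends on $\sigma$ only through $(m_1, m_2)$, combining the Gibbs weight with the block-wise combinatorial degeneracy gives, on the admissible lattice,
$$\P\bigl((m_1, m_2) = (x_1, x_2)\bigr) \propto \binom{N/2}{\tfrac{N}{4}(1+x_1)}\binom{N/2}{\tfrac{N}{4}(1+x_2)}\exp\!\left(\tfrac{N}{8}\bigl[2\alpha x_1 x_2 + \beta(x_1^2 + x_2^2)\bigr]\right).$$
Applying Stirling's formula to both binomials, this is, up to a smooth prefactor of order $1/N$, equal to $\exp(\tfrac{N}{2} F(x_1, x_2))$, where
$$F(x_1, x_2) = I(x_1) + I(x_2) + \tfrac{\alpha}{2} x_1 x_2 + \tfrac{\beta}{4}(x_1^2 + x_2^2), \qquad I(m) = \log 2 - \tfrac{m^2}{2} - \tfrac{m^4}{12} + O(m^6).$$

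The next step is to Taylor-expand $F$ at the origin in the $(u,v)$ variables. Substituting $x_1 = (u+v)/2$, $x_2 = (u-v)/2$ yields
$$\tfrac{N}{2}\bigl(F - 2\log 2\bigr) = \tfrac{N}{16}\bigl[(\alpha + \beta - 2)u^2 + (\beta - \alpha - 2)v^2\bigr] - \tfrac{N}{192}\bigl(u^4 + 6 u^2 v^2 + v^4\bigr) + O\bigl(N(u^6+v^6)\bigr).$$
On the critical line $|\alpha|+\beta = 2$ exactly one quadratic coefficient vanishes: the $u$-coefficient when $\alpha > 0$, the $v$-coefficient when $\alpha < 0$; the surviving quadratic coefficient always equals $-N|\alpha|/8$. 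This forces the non-critical mode onto the standard Gaussian scale $N^{-1/2}$, while the degenerate mode is controlled by the negative quartic term and must therefore be rescaled by $N^{-1/4}$.

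Rescaling according to this prediction -- $y := \tfrac{\sqrt N}{2}\cdot(\text{non-critical mode})$ and $x := \tfrac{N^{1/4}}{2}\cdot(\text{critical mode})$ -- the rescaled exponent converges pointwise to $-\tfrac{|\alpha|}{2} y^2 - \tfrac{1}{12} x^4$, since the cross term $-\tfrac{N}{32} u^2 v^2$ is of order $N^{-1/2}$ under this scaling and all higher-order Taylor terms vanish similarly. The limiting variance $1/|\alpha|$ coincides with $2/(2 - (\beta - \alpha))$ when $\alpha > 0$ and with $-1/\alpha$ when $\alpha < 0$, so the limiting product density $\propto e^{-|\alpha|y^2/2}\cdot e^{-x^4/12}$ yields the four statements of the theorem after taking the appropriate marginal.

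The main technical obstacle is promoting pointwise convergence of rescaled densities to convergence in distribution and pinning down the asymptotics of $Z_{N,\alpha,\beta}$. This requires a uniform upper bound of the form $F(x_1, x_2) \le F(0,0) - c\,(x_1^2 + x_2^2)$ outside a small neighbourhood of the origin; equivalently, $(0,0)$ must be the unique global maximum of $F$ throughout the critical line. This follows from strict concavity of $I$ together with Theorem \ref{theo1}, which locates the mass of $(m_1, m_2)$ at $(0,0)$ in the high-temperature phase. Once this tail estimate is secured, uniform integrability delivers the correct normalising constant. The case $\alpha < 0$ -- not treated in \cite{LSblock1} -- requires no new analytic input, since the Taylor expansion of $F$ is invariant under the swap $(\alpha, u, v) \mapsto (-\alpha, v, u)$ on the critical line.
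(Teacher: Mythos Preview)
Your approach is correct and reaches the same conclusions, but it differs methodologically from the paper's proof. The paper uses the Hubbard--Stratonovich transform: it convolves the law of the suitably rescaled pair $(w_1,w_2)=\bigl(\tfrac{m_1+m_2}{2},\tfrac{m_1-m_2}{2}\bigr)$ with an independent centred Gaussian whose covariance is matched to the quadratic part of the Hamiltonian. This linearises the exponent, the spin sum is then carried out \emph{exactly} to produce $\log\cosh$ terms, and the remainder of the proof is a three-region (inner/intermediate/outer) Laplace analysis of the resulting integral. You instead sum over spins first via the binomial degeneracy, replace the binomials by Stirling, and do Laplace on the entropy--energy functional $F$ directly.

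The two routes meet at essentially the same Taylor expansion (your quartic coefficient $-1/12$ is exactly the fourth-order coefficient of $\log\cosh$), and both exploit the $(\alpha,u,v)\leftrightarrow(-\alpha,v,u)$ symmetry to transfer the $\alpha<0$ analysis to $\alpha>0$. Your argument is arguably more elementary, since it avoids the auxiliary Gaussian altogether. The price is that you must manage the discrete-to-continuous passage yourself: $(m_1,m_2)$ lives on a lattice of mesh $4/N$, so ``pointwise convergence of rescaled densities'' really means a Riemann-sum/local-limit argument, and the Stirling remainder has to be controlled uniformly over the relevant window. The paper's convolution with a shrinking Gaussian absorbs this issue automatically, because the convolved law already has a genuine Lebesgue density and the Gaussian smear vanishes in the limit. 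A second point: your tail bound $F(x_1,x_2)\le F(0,0)-c(x_1^2+x_2^2)$ outside a fixed neighbourhood is fine on the compact square $[-1,1]^2$, but note that near the origin along the critical direction the decay of $F$ is only quartic, so an intermediate-region estimate (Taylor bound with explicit remainder, as in the paper's treatment of $B_{r,N}\setminus B(0,R)$) is still needed to close the uniform-integrability argument; this is routine but should not be skipped.
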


\begin{rem}
 Theorem \ref{crit_fluc} also shows the difference between $|\alpha|+\beta=2, \alpha>0$ and $|\alpha|+\beta=2, \alpha<0$. Indeed, according to Theorem \ref{theo1} for $|\alpha|+\beta$ slightly larger than 2, the block magnetizations $m_1$ and $m_2$ are close together when  $\alpha>0$, while they move apart for $\alpha<0$.

Moreover, notice that the probability measure $\rho$ in Theorem \ref{crit_fluc} is the same as the limiting distribution of the appropriately rescaled magnetization in the Curie-Weiss model, see \cite[TheoremV.9.5]{Ellis-EntropyLargeDeviationsAndStatisticalMechanics}
\end{rem}
\begin{proof}
The proof of this theorem makes use of the so-called Hubbard-Stratonovich transform.

Introduce the random variables
\begin{equation*}
w_1:= w_1(\sigma):=\frac 1 N \sum_{i=1}^N \sigma_i= \frac 1 N\left(\sum_{i \in S} \sigma_i +\sum_{i \notin S} \sigma_i\right)
\end{equation*}
and
\begin{equation*} w_2:= w_2(\sigma):= \frac 1 N\left(\sum_{i \in S} \sigma_i -\sum_{i \notin S} \sigma_i\right)
\end{equation*}
together with the vectors $\tilde w=(\tilde w_1, \tilde w_2)$ and $\hat w=(\hat w_1, \hat w_2)$ consisting of the components
$$
\tilde w_1:= \sqrt N w_1 \qquad \mbox{and } \quad \tilde w_2:= N^{1/4} w_2,
$$
as well as
$$
\hat w_1:= N^{1/4} w_1 \qquad \mbox{and } \quad \hat w_2:= \sqrt N w_2.
$$
Note that
\begin{align*}
H_{N,\alpha, \beta, S}(\sigma)&= - \frac{N}{2} \left(2 \alpha \frac14 m_1 m_2 + \beta \frac14 m_1^2 + \beta \frac14 m_2^2\right)\\
&=- \frac{N}{8} \left(2 \alpha (w_1+w_2)(w_1-w_2) + \beta\left( (w_1+w_2)^2+ (w_1-w_2)^2\right)\right)\\
&=-\frac{N}{4} (\beta ({w_1}^2 +{w_2}^2) + \alpha ({w_1}^2 - {w_2}^2))
\\
&=-\frac{1}{4} (\sqrt N(\alpha + \beta) \hat{w_1}^2 + (\beta-\alpha) \hat{w_2}^2)\\
&= -\frac{1}{4} (\sqrt N \kappa_\alpha \hat{w_1}^2 + \eta_\alpha \hat{w_2}^2),
\end{align*}
where we have set
$$
\kappa_\alpha:= \alpha+\beta = \left\{\begin{array}{ll} 2 & \mbox {if }\alpha > 0 \\ 2(1+\alpha) & \mbox {if }\alpha < 0 \end{array} \right.
$$
and
$$
\eta_\alpha:= \beta-\alpha = \left\{\begin{array}{ll} 2 & \mbox {if }\alpha < 0 \\ 2(1-\alpha) & \mbox {if }\alpha > 0 \end{array}. \right.
$$

Note that $\kappa_\alpha>0$ as well as $\eta_\alpha>0$, if $|\alpha| <\beta$ and $|\alpha|+\beta =2$.

Similarly,
$$
H_{N,\alpha, \beta, S}(\sigma)= -\frac{1}{4} (\kappa_\alpha \tilde{w_1}^2 + \sqrt N\eta_\alpha \tilde{w_2}^2).
$$

We now begin with the case of $\alpha <0$. Then $\kappa_\alpha <2$, which will make a difference, as we will see.

Our strategy can be summarized as ``linearizing'' the Hamiltonian by tilting it with a suitable Gaussian random variable (a similar technique was used e.g.~in \cite{gentzloewe}). To this end, let ${\mathcal N}(0, C)$ be a 2-dimensional Gaussian distribution with expectation $0$ and covariance matrix $C$,
$$
C=  \begin{pmatrix}
\frac{2}{\kappa_\alpha} & 0\\
0 & \frac{1}{\sqrt N}
\end{pmatrix}.
$$
Denote by $\overline \rho_{N,\alpha,\beta}:= \mu_{N,\alpha,\beta}\circ(\tilde{w})^{-1}$ the distribution of $\tilde w$ under the Gibbs measure and by
$\chi_{N,\alpha,\beta} :=\overline \rho_{N,\alpha,\beta} * {\mathcal N}(0, C)$. Let us compute the Lebesgue density of $\chi_{N,\alpha,\beta}$: Let $A$ be a measurable subset of ${\mathbb R}^2$.
Then with
$$
K_1:= \frac 1 {2 \pi \sqrt{\mathrm{det}\,C}}, \quad K_2 := \frac 1 {{2 \pi \sqrt{\mathrm{det}\,C} Z_{N,\alpha,\beta}}}
\quad \mbox{and } \quad  K_3:=K_2 2^N
$$
we obtain
\begin{eqnarray*}
& & \chi_{N,\alpha, \beta} (A) =
\sum_{\sigma \in \{-1,1\}^N} {\mathcal N}(0,C) (A-\tilde{w}) \mu_{N,\alpha,\beta}(\sigma)\\
& = & K_1
\sum_{\sigma \in \{-1,1\}^N}
\int_{A- {\tilde{w}} }    \exp \left( -\frac{1}{2}\left( \frac{\kappa_\alpha}{2}x^2  +
\sqrt N y^2 \right)\right) \mu_{N,\alpha,\beta}(\sigma) dx dy\\
& = & K_1
\sum_{\sigma \in \{-1,1\}^N}
\int_{A}    \exp \left( -\frac{1}{2}\left( \frac{\kappa_\alpha}{2}(x- \tilde{w_1})^2  +
\sqrt N (y-\tilde{w_2})^2\right)\right) \mu_{N,\alpha,\beta}(\sigma) dx dy\\
& = & K_2
\sum_{\sigma \in \{-1,1\}^N}
\int_{A}    \exp \left( -\frac{1}{2}\left( \frac{\kappa_\alpha}{2}(x- \tilde{w_1})^2  +
\sqrt N (y-\tilde{w_2})^2\right)
+\frac{1}{4} \left(\kappa_\alpha \tilde{w_1}^2 +
2\sqrt N \tilde{w_2}^2\right)\right) dx dy
\end{eqnarray*}
because $\eta_\alpha=2$ when $\alpha <0$. Thus
\begin{eqnarray}
& & \chi_{N,\alpha, \beta} (A)
=  K_2
\sum_{\sigma \in \{-1,1\}^N}
\int_{A}    \exp \left( -\frac{1}{4} \kappa_\alpha x^2 -
\frac{\sqrt N}2 y^2
+\frac{1}{2} \kappa_\alpha x \tilde{w_1} +
\sqrt N y \tilde{w_2}\right)dx dy \nonumber \\
& = & K_3
\int_{A}    \exp\left( -\frac{1}{4} \kappa_\alpha x^2 -
\frac{\sqrt N} 2 y^2 \right)\exp\left( \frac N2 \log \cosh \left( \frac{1}{\sqrt{N}}\frac{\kappa_\alpha}{2} x +
N^{-\frac 14} y\right)\right.\nonumber  \\
& & \left. \qquad \qquad +  \frac N2 \log \cosh \left( \frac{\kappa_\alpha}{2\sqrt N} x -
\frac{1}{N^{\frac 14}} y\right) \right)dx dy. \label{density_1}
\end{eqnarray}
Note that in the above expression, both the integral term and the constant $K_3$ depend on $N$. However, it suffices to consider the convergence of the integral, because once the convergence of the  integral is shown for all measurable subsets   $A \subset \mathbb R^2$ and in particular for $A= \mathbb R^2$, this implies the convergence of $K_3$ to a constant.

Introduce as short-hand for the negative exponent in \eqref{density_1}:
\begin{equation}\label{def_Phi}
\Phi(x,y):= \frac{1}{4} \kappa_\alpha x^2 +
\frac{\sqrt N} 2 y^2
-
\frac N2 \left(\log \cosh \left(\frac{\kappa_\alpha}{2\sqrt N} x + \frac{1}{N^{\frac 14}} y\right)
+\log \cosh \left(\frac{\kappa_\alpha}{2\sqrt N} x - \frac{1}{N^{\frac 14}} y\right)\right).
\end{equation}
Then by Taylor expansion of
$\log \cosh (z) = \frac{1}{2}z^2-\frac 1 {12} z^4+\mathcal{O}(z^6)$ up to fourth order we see that the $y^2$-terms cancel, and so do the $xy$-terms and we arrive at
\begin{equation*}
\Phi(x,y)
= \frac{x^2}{2} \left( \frac{\kappa_\alpha}{2} - \frac{\kappa^2_\alpha}{4}\right)+
\frac{y^4}{12}
+\mathcal{O}(N^{-\frac 12}),
\end{equation*}
where the constant in the $\mathcal{O}(N^{-\frac 12})$-term depends on $x$ and $y$.
Therefore we obtain for the density of the convolution
\begin{equation*}
\chi_{N,\alpha, \beta} (A)
 = K_3 \int_A\exp  \left[ -\frac{1}{2} x^2\left(  \frac{\kappa_\alpha}{2} -\frac{\kappa_\alpha^2}{4} \right)-\frac 1{12} y^4 +\mathcal{O}(N^{-\frac 12 })\right]dx \,dy\\
\end{equation*}
and the convergence in the $\mathcal{O}(N^{-\frac 12})$-term is uniform on compact subsets of $\R^2$.
From here it becomes immediately clear, that the cases $\kappa_\alpha=2$ and $\eta_\alpha=2$ have to be treated separately.

We next show how to extend the convergence to non-compact sets. This is done in the spirit of \cite{LSblock1} or \cite{gentzloewe}.

To this end for a Borel set $A \subset \R^2$ we split the integral
\begin{align}\label{split}
\int_A & \exp  \left( -\Phi(x,y)\right) dx dy =\int_{A \cap B(0,R)} \exp  \left( -\Phi(x,y)\right)dx dy\\
& + \int_{A \cap B(0,R)^c \cap B_{r,N}}\exp  \left( -\Phi(x,y)\right)dx dy
 + \int_{A \cap B_{r,N}^c}  \exp  \left( -\Phi(x,y)\right)dx dy\nonumber.
 \end{align}
Here, for any number $R >0$ we denote by $B(0,R)$ the (2-dimensional) ball centered in $0$ with radius $R$
and by $B_{r,N}$ we denote the box
\begin{equation}\label{def_B_r}
B_{r,N}:= (- r\sqrt N, r\sqrt N) \, \times (-r N^{1/4}, r N^{1/4}). 
\end{equation}

Later $R$   will be sent to $\infty$   and $r$ will be taken very small. Note that we will however first take the limit $N\to \infty$ and then $R\to \infty$. The summands in \eqref{split} will be called inner region, intermediate region, and outer region, respectively. They will be treated separately.

As noted earlier for fixed $R>0$
\begin{align*}
&\lim_{N\to \infty} \int_{A \cap B(0,R)} \exp  \left( -\Phi(x,y)\right)dx dy
\\
= & \int_{A \cap B(0,R)} \exp  \left[ -\frac{1}{2} x^2\left(  \frac{\kappa_\alpha}{2} -\frac{\kappa\alpha^2}{4} \right)
-   \frac{1}{12} y^4\right]dx \,dy.
\end{align*}
This already finishes the treatment of the inner region.

For the outer region $A \cap B_{r,N}^c$, which we will further split up in two regions, we change scales and we write
$$
\Phi(x,y) = N \tilde \Phi\left(\frac x {\sqrt N}, \frac y {N^{\frac 14}} \right),
$$
where obviously
\begin{equation*}
\tilde \Phi(x,y)
:=  \frac{1}{4} \kappa_\alpha x^2 +   \frac{1}{2}y^2
-
 \frac 12 \log \cosh \left(\frac{\kappa_\alpha}{2} x + y\right)
- \frac 12 \log \cosh \left( \frac{\kappa_\alpha}{2} x -y\right).
\end{equation*}

We denote the free energy of the Curie-Weiss model at inverse temperature $\beta$ by $f_{CW}(\beta)$. Note that
$f_{CW}(\beta)$ is non-negative and given by
$$
f_{CW}(\beta)= -\frac \beta 2 (m^+(\beta)^2+ \log (\cosh(\beta m^+(\beta)))=\max_t [-\frac{1}{2\beta}t^2+\log \cosh(t)].
$$
Then,
$$
\log\cosh(x) \le  \frac 1 4 x^2 +\max_t [-\frac{1}{4}t^2+\log \cosh(t)]=\frac 1 4 x^2+ f_{CW}(2) .
$$
Hence,
\begin{align*}
\tilde \Phi(x,y)&=\frac{1}{4} \kappa_\alpha x^2 +   \frac{1}{2}y^2
-
 \frac 12 \log \cosh \left(\frac{\kappa_\alpha}{2} x + y\right)
- \frac 12 \log \cosh \left( \frac{\kappa_\alpha}{2} x -y\right) \\
&\ge \frac{1}{4} \kappa_\alpha x^2 +   \frac{1}{2}y^2 - \frac 12 \frac{(\frac{\kappa_\alpha x}2+y)^2}4
 - \frac 12 \frac{(\frac{\kappa_\alpha x}2-y)^2}4- f_{CW}(2) \\
 &=
x^2 \left( \frac{1}{4} \kappa_\alpha-\frac{1}{16} \kappa^2_\alpha\right)+\frac{ y^2}{4}-f_{CW}(2).
\end{align*}

Setting $$r_0:= 4\sqrt{\frac{
 f_{CW}(2)}{\min(\kappa_\alpha-\frac{\kappa_\alpha^2}4,1)}},$$
and, in accordance with \eqref{def_B_r}, 
$$B_{r_0, N}:= (- r_0\sqrt N, r_0\sqrt N) \, \times (-r_0 N^{1/4}, r_0 N^{1/4}),$$
we assume that $r$ is sufficiently small such that $r<r_0$, i.e.~$B_{r,N} \subset B_{r_0,N}$, and we split up the outer region 
$$A \cap B_{r,N}^c= (A \cap B_{r_0, N}^c) \cup (A \cap (B_{r_0,N} \setminus B_{r,N})).$$

Then, if $(x,y) \notin B_{r_0, N}$ we have $\frac{x^2}{N} + \frac{y^2}{\sqrt{N}} \geq r_0^2$ and hence we know that
\begin{align*}
-\Phi(x,y)  &= -N \tilde \Phi\left(\frac x {\sqrt N}, \frac y {N^{\frac 14}} \right)
\le -N \left(\frac{x^2}N \left( \frac{1}{4} \kappa_\alpha-\frac{1}{16} \kappa^2_\alpha\right)+\frac{ y^2}{4 \sqrt N}\right)+Nf_{CW}(2)\\
&  
= -\frac{N}{16}  \left(\frac{x^2}N \left(  \kappa_\alpha-\frac{1}{4} \kappa^2_\alpha\right)+\frac{ y^2}{ \sqrt N}\right)-
\frac{3N}4 \left(\frac{x^2}N \left( \frac{1}{4} \kappa_\alpha-\frac{1}{16} \kappa^2_\alpha\right)+\frac{ y^2}{4 \sqrt N}\right)+Nf_{CW}(2)\\
&\le -\frac{3N}4 \left(\frac{x^2}N \left( \frac{1}{4} \kappa_\alpha-\frac{1}{16} \kappa^2_\alpha \right)+\frac{ y^2}{4 \sqrt N}\right)
\\ &\le - N K_\alpha \left(\frac{x^2}N+\frac{y^2}{\sqrt N}\right)
\end{align*}
for some appropriate constant $K_\alpha>0$ depending on $\alpha$.

Thus
\begin{align}
\int_{A \cap B_{r_0, N}^c}  \exp  \left( -\Phi(x,y)\right)dx dy & = \int_{A \cap B_{r_0, N}^c}  \exp   \left(-N \tilde \Phi\left(\frac x {\sqrt N}, \frac y {N^{\frac 14}} \right) \right)dx dy\nonumber\\
&\le \int_{A \cap B_{r_0, N}^c}  \exp   \left(- N K_\alpha \left(\frac{x^2}N+\frac{y^2}{\sqrt N}\right)\right)dxdy\notag\\
&\le e^{-N K_\alpha r_0^2/2} \int_{\R^2}  \exp  \left( - \frac{N K_\alpha}2 \left(\frac{x^2}N+\frac{y^2}{\sqrt N}\right)\right)dx dy \nonumber \\
&\le K'_\alpha e^{-N K_\alpha r_0^2/2} \label{farout}
\end{align}
for another constant $K_\alpha'>0$.

Next we want to extend \eqref{farout} to arbitrarily small $r \in (0, r_0)$. To this end we claim
\begin{equation}\label{claim_outer_region}
\inf_{x \in B_{r_0,N} \setminus B_{r,N}} \tilde \Phi\left(\frac x {\sqrt N}, \frac y {N^{\frac 14}} \right) >0, \quad \text{for all }0<r<r_0.
\end{equation}

By rescaling it suffices to consider $\tilde \Phi$ on the set 
$(-r_0,r_0) \times (-r_0, r_0)\setminus (-r,r)\times (-r, r )$.

Obviously, $\tilde \Phi(0,0)=0$. We want to show that $(0,0)$ is the unique minimum of $\tilde \Phi$.

Indeed, $\tilde \Phi$ gets minimal if $\nabla \tilde \Phi=0$ and the latter is given by
$$
\nabla \tilde \Phi(x,y) = \left( \begin{array}{l}
\frac 12 \kappa_\alpha x - \frac{\kappa_\alpha}4 \tanh(\frac{\kappa_\alpha x}2+y)-\frac{\kappa_\alpha}4 \tanh(\frac{\kappa_\alpha x}2-y)\\
 y - \frac{1}2 \tanh(\frac{\kappa_\alpha x}2+y)+\frac{1}2 \tanh(\frac{\kappa_\alpha x}2-y)
\end{array}
\right).
$$
Solving $\nabla \tilde \Phi=0$ thus is equivalent to solving
$$
G(x,y):= \left( \begin{array}{l}
\frac 12 \tanh(\frac{\kappa_\alpha x}2+y)+\frac{1}2 \tanh(\frac{\kappa_\alpha x}2-y)\\
\frac 12 \tanh(\frac{\kappa_\alpha x}2+y)-\frac{1}2 \tanh(\frac{\kappa_\alpha x}2-y)
\end{array}
\right)= \left(\begin{array}{l} x \\ y\end{array}\right).
$$
Obviously, $(0,0)$ is a solution.
To see that $(0,0)$ is indeed the only solution, we assume that   $G(x,y)=(x,y)$ for some $(x,y)$. Note that   $x=0$ immediately implies $y=0$. If $y=0$, we have $\tanh(\frac{\kappa_{\alpha}}{2}x)=x$, which has only the solution $x=0$, since $\frac{\kappa_{\alpha}}{2}\leq 1$.

Hence,  we can assume that $x \neq 0$ and $y \neq 0$.  By the symmetry of $\tanh$, the first equation induced by $G(x,y)=(x,y)$ reads
 \begin{equation*}
\frac{ \tanh(y+\frac{\kappa_\alpha x}2)- \tanh(y-\frac{\kappa_\alpha x}2)}{\kappa_{\alpha}x} =\frac{2}{\kappa_{\alpha}}.
\end{equation*}
With  $g(z):=(\tanh(x))'= 1- (\tanh(z))^2$ and the mean value theorem we have
\begin{equation}\label{mean_value_1}
\frac{ \tanh(y+\frac{\kappa_\alpha x}2)- \tanh(y-\frac{\kappa_\alpha x}2)}{\kappa_{\alpha}x} =g(x_0)\stackrel{!}{=} \frac{2}{\kappa_{\alpha}} = \frac{1}{1+ \alpha} >1,
\end{equation}
for some $x_0$ between $y+ \frac{\kappa_{\alpha x}}{2}$ and $y- \frac{\kappa_{\alpha x}}{2}$ (recall $\alpha<0$ for the last inequality).
Since  $0\leq g(z) \leq 1$ for all $z \in \mathbb R$ the in equality in \eqref{mean_value_1} can never be true, and there is no solution to $G(x,y)=(x,y)$ with $x \neq 0, y \neq 0$.
This, in turn implies that on $\mathbb R^2$ the function $\tilde \Phi$  has a unique minimum
at  $(0,0)$.
This proves exactly the claim in \eqref{claim_outer_region} or in other words 
 for all $r>0$ there is a constant $\xi(r, r_0)$ such that 
$$
\tilde \Phi\left( \frac{x}{\sqrt N},\frac{y}{N^{1/4}}\right) >\xi(r, r_0)
$$
for all $(x,y) \in B_{r_0,N}\setminus B_{r,N}$. 
From here we get that 
\begin{align}
\int_{A \cap (B_{r_0,N}\setminus B_{r,N})}  \exp  \left( -\Phi(x,y)\right)dx dy 
& \le \int_{B_{r_0,N}}
e^{-N \xi(r, r_0)} dxdy\notag\\
&\le 2r_0^2 N e^{-N \xi(r, r_0)}\notag\\
&\le e^{-N \xi(r, r_0)/2}\label{out}
\end{align}
for $N$ sufficiently large. 
By \eqref{farout} and \eqref{out} we have 
$$
\lim_{N \to \infty} \int_{A \cap  B_{r,N}^c}  \exp  \left( -\Phi(x,y)\right)\, dxdy=0
$$ 
for  $r>0$ sufficiently small.

Now we turn to the intermediate region $A \cap B(0,R)^c \cap B_{r,N}$: Recalling the definition of $\tilde{\Phi}$ we want to expand $   \log \cosh \left(\frac{\kappa_\alpha}{2\sqrt N} x + \frac{1}{N^{\frac 14}} y\right)
$ resp. $\log \cosh \left(\frac{\kappa_\alpha}{2\sqrt N} x - \frac{1}{N^{\frac 14}} y\right) $ for $x,y \in B_{r,N}\setminus B(0,R)$, where we recall the definition of $B_{r,N}$ given in \eqref{def_B_r}.
Note that $x,y \in B_{r,N}$ implies $\frac{\kappa_\alpha}{2\sqrt N} x \pm \frac{1}{N^{\frac 14}} y \in (-2r, 2r)$.

For $z \in (-2r, 2r)$ we use the Taylor approximation and estimate the remainder using the Lagrange form
 $$\left|\log \cosh (z)- \frac {z^2} 2 +\frac1{12} z^4 \right|\leq C r z^6 \quad \text{resp.} \quad -\log \cosh (z) \geq -\frac{z^2}{2} + \frac{1}{12}z^4 -Crz^6$$
 for some positive constant $C>0$.
Applying this to $\tilde \Phi$ and $x,y \in B_{r,N}\setminus B(0,R)$ and using that the terms with odd powers of $x$ (and $y$) cancel, this gives
\begin{eqnarray*}
&&N \tilde \Phi\left(\frac x{\sqrt N},\frac y {N^{1/4}}\right)
\geq
\frac{x^2 \kappa_\alpha}{4}+\frac{\sqrt N y^2}{2}
-\frac N4 \left(\frac{\kappa_\alpha x}{2\sqrt N} + \frac{y}{N^{1/4}}\right)^2
\\&&\quad
-\frac N4 \left(\frac{\kappa_\alpha x}{2\sqrt N} - \frac{y }{N^{1/4}}\right)^2
+ \frac N{24}  \left[\left(\frac{\kappa_\alpha x}{2\sqrt N}+\frac{y}{N^{1/4}}\right)^4
+\left(\frac{\kappa_\alpha x}{2\sqrt N}-\frac{y}{N^{1/4}}\right)^4\right]\\
&&\quad - C r \frac{N}{2}
\left[\left(\frac{\kappa_\alpha x}{2\sqrt N}+\frac{y}{N^{1/4}}\right)^6
+\left(\frac{\kappa_\alpha x}{2\sqrt N}-\frac{y}{N^{1/4}}\right)^6\right]\\
\\
&\ge &
\frac{x^2}{2} \left( \frac{\kappa_\alpha}{2} - \left( \frac{\kappa_\alpha}{2}\right)^2\right)+
\frac{y^4}{12} +\frac N2  \frac{\kappa_\alpha^2 x^2}{4 N}\frac{y^2}{\sqrt N}
\\
&&
\quad -
C r N \left[\frac{\kappa_\alpha^6 x^6}{64 N^{3}}+15\frac{\kappa_\alpha^4 x^4}{16 N^2}\frac{y^2}{\sqrt N}+
15\frac{\kappa_\alpha^2 x^2}{4 N}\frac{y^4}{N}+\frac{y^6}{N^{3/2}}\right]
\\
&\ge &
\frac{x^2}{2} \left( \frac{\kappa_\alpha}{2} - \left( \frac{\kappa_\alpha}{2}\right)^2\right)+
\frac{y^4}{12}  -
C r N \left[\frac{\kappa_\alpha^6 x^6}{64 N^{3}}+15\frac{\kappa_\alpha^4 x^4}{16 N^2}\frac{y^2}{\sqrt N}+
15\frac{\kappa_\alpha^2 x^2}{4 N}\frac{y^4}{N}+\frac{y^6}{N^{3/2}}\right].
\end{eqnarray*}
Now note that for $x,y \in B_{r,N}$ we have
\begin{equation}\label{rest2}
C r N \frac{\kappa_\alpha^6 x^6}{64 N^{3}} \le C  \frac{1}{64} \kappa_\alpha^6 x^2 r^5,
\end{equation}
\begin{equation}\label{rest3}
15 C r N \frac{\kappa_\alpha^4 x^4}{16 N^2}\frac{y^2}{\sqrt N}\le  C  \frac{15}{16} \kappa_\alpha^4 x^2 r^5,
\end{equation}
\begin{equation}\label{rest4}
15 C r N \frac{\kappa_\alpha^2 x^2}{4 N}\frac{y^4}{N}\le  C  \frac{15}{4} \kappa_\alpha^2 x^2 r^5,
\end{equation}
and
\begin{equation}\label{rest5}
C r N \frac{y^6}{N^{3/2}} \le C  y^4 r^3.
\end{equation}
Note that the coefficients of $x^2$ resp.~of $y^4$ on the right hand sides of equations \eqref{rest2}-\eqref{rest5} become arbitrarily small, if we take $r$ small enough.

On the set $A_{R,N}^r:= A \cap B(0,R)^c \cap B_{r,N}$ we can therefore bound
\begin{eqnarray*}
&&N \tilde \Phi\left(\frac x{\sqrt N},\frac y {N^{1/4}}\right)\\
&\ge&
\frac{x^2}{2} \left( \frac{\kappa_\alpha}{2} - \left( \frac{\kappa_\alpha}{2}\right)^2
-C \frac{1}{32} \kappa_\alpha^6 r^5-C \frac{15}{8} \kappa_\alpha^4 r^5-
\frac{15}{2}C \kappa_\alpha^2 r^5
\right)+
\frac{y^4}{12}(1-12 C r^3).
\end{eqnarray*}
Since $\kappa_\alpha <2$, there exists some $\varepsilon >0$ such that for $r$ sufficiently small we have
$$
 \frac{\kappa_\alpha}{2} - \left( \frac{\kappa_\alpha}{2}\right)^2
-C \frac{1}{32} \kappa_\alpha^6 r^5-C \frac{15}{8} \kappa_\alpha^4 r^5-
\frac{15}{2}C \kappa_\alpha^2 r^5
 > \varepsilon
$$
as well as
$$
1-12 C r^3 >\varepsilon.
$$

Hence for such a small value of $r$ and $N \in \mathbb N$  we get
\begin{equation}\label{intermed_est_1}
\int_{A_{R,N}^r}  \exp  \left( -\Phi(x,y)\right) dx\, dy \le  \int_{A_{R,N}^r}  \exp  \left( -\frac{\varepsilon}{2}x^2 -
\frac{\varepsilon}{12}y^4\right)dx \, dy.
\end{equation}
Note that the integrand on the left hand side of \eqref{intermed_est_1} implicitly depends on $N$ while the integrand on the right right hand side does not depend on $N$ and is integrable over $\mathbb R^2$, 
i.e.~the estimate in \eqref{intermed_est_1} carries over if we take $N \to \infty$ on both sides
\begin{equation}\label{intermed_est_2}
\lim_{N \to \infty} \int_{ A_{R,N}^r}  \exp  \left( -\Phi(x,y)\right) dx\, dy \le  \int_{A \cap B(0,R)^c}  \exp  \left( -\frac{\varepsilon}{2}x^2 -
\frac{\varepsilon}{12}y^4\right)dx \, dy.
\end{equation}
Further, as again the integrand on the right hand side of \eqref{intermed_est_2}  is integrable over all of $\R^2$,  the right hand side of \eqref{intermed_est_2} vanishes for $R \to \infty$, i.e.
\begin{align*}
\lim_{R \to \infty} \lim_{N \to\infty} \int_{A_{R,N}^r}  \exp  \left( -\Phi(x,y)\right) dx\, dy =0.
\end{align*}

Combining the three considerations above shows  that the only contribution to
$ \chi_{N,\alpha, \beta}$ comes from the inner region
\begin{equation*}
\lim_{R \to \infty}\lim_{N \to \infty}\int_A  \exp  \left( -\Phi(x,y)\right) dx dy =\int_{A } \exp  \left[ -\frac{1}{2} x^2\left(  \frac{\kappa_\alpha}{2} -\frac{\kappa\alpha^2}{4} \right)
-   \frac{1}{12} y^4\right]dx \,dy.
\end{equation*}

 Thus in parti\-cular, the distribution of $\tilde{w_2}=\frac{N^{1/4}}{2} (m_1-m_2)$ convoluted with an independent Gaussian distribution with mean $0$ and variance $\frac 1 {\sqrt N}$ converges to a probability measure $\rho$ on $\R$ with density
proportional to $\exp(-\frac 1{12} x^4)$. Since the $\mathcal{N}(0, \frac 1{\sqrt N})$-distribution converges to 0, as $N \to \infty$, this implies that $\frac{N^{1/4}}{2} (m_1-m_2)$  converges to $\rho$ in distribution, as well.
Moreover, the above calculation yields that the distribution of $\tilde{w_1}= \frac{\sqrt{N}}{2}(m_1 +m_2)$ convoluted with an independent Gaussian $\mathcal N (0, \frac{2}{\kappa_\alpha})$ converges to a Gaussian variable with mean zero and variance $\left( \frac{\kappa_{\alpha}}{2} - \frac{\kappa_{\alpha^2}}{4}\right)^{-1}$. In terms of characteristic functions this means
\begin{equation*}
\lim_{n \to \infty}\mathbb E (e^{it \tilde{w_1}})e^{-\frac{t^2}{2}\frac{2}{\kappa_{\alpha}}}= e^{-\frac{t^2}{2} \left( \frac{\kappa_{\alpha}}{2}- \frac{\kappa_{\alpha}^2}{4}\right)^{-1}}
\end{equation*}
resp.
\begin{equation*}
\lim_{n \to \infty}\mathbb E (e^{it \tilde{w_1}})= e^{-\frac{t^2}{2} \left(\frac{1}{  \frac{\kappa_{\alpha}}{2}- \frac{\kappa_{\alpha}^2}{4}} - \frac{2}{\kappa_{\alpha}}\right)}=e^{-\frac{t^2}{2} \left(\frac{1}{1-\frac{\kappa_{\alpha}}{2}}\right)}=e^{-\frac{t^2}{2} \left(- \frac{1}{\alpha}\right)}.
\end{equation*}
Hence, $\frac{\sqrt{N}}{2}(m_1+m_2)$ converges in distribution to a Gaussian distribution with mean zero and variance $-\frac{1}{\alpha}.$

To analyze the second situation, i.e.~$\alpha >0$ and $\alpha+\beta =2$, we now convolute the distribution of $\hat w$ with respect to $\mu_{N,\alpha,\beta, S}$ with a 2-dimensional normal distribution $\mathcal{N}(0, C)$, where this time
$$
C:=  \begin{pmatrix}
\frac{1}{\sqrt N} & 0\\
0 & \frac{2}{\beta-\alpha}
\end{pmatrix}=\begin{pmatrix}
\frac{1}{\sqrt N} & 0\\
0 & \frac{2}{\eta_{\alpha}}
\end{pmatrix}
$$
(recall that $\beta-\alpha >0$).
Calling this convolution $\hat \chi_{N,\alpha,\beta}$, i.e.~$\hat \chi_{N,\alpha,\beta} :=
\mu_{N,\alpha,\beta}(\hat{w})^{-1} * {\mathcal N}(0, C)$, we can compute its density as above (recall that $\kappa_{\alpha}=2$ in this case).
For a 2-dimensional Borel set $A$
we obtain
\begin{align}
&\hat \chi_{N,\alpha, \beta} (A)=\sum_{\sigma \in \{-1,1\}^N}
{\mathcal N}(0,C) (A-\hat{w})\mu_{N,\alpha,\beta}(\sigma)\nonumber \\
&=  K_2
\sum_{\sigma \in \{-1,1\}^N}
\int_{A}    \exp \left( -\frac{1}{2}\left( \sqrt{N}(x- \hat{w_1})^2  +
\frac{\eta_{\alpha}}{2} (y-\hat{w_2})^2\right)
+\frac{1}{4} \left(\sqrt{N}2 \hat{w_1}^2 +
\eta_{\alpha}  \hat{w_2}^2\right)\right) dx dy
\nonumber \\
&=  K_2
\sum_{\sigma \in \{-1,1\}^N}
\int_{A}    \exp \left(-\frac{\sqrt{N}}{2}x^2+\sqrt{N} x \hat{w_1} - \frac{\eta_{\alpha}}{4}y^2 + \frac{\eta_{\alpha}}{2} y \hat{w_2}\right) dx dy
\nonumber  \\
& =  K_3
\int_{A}    \exp\left( - \frac{ \sqrt N}{2}x^2 -   \frac{\eta_{\alpha}}{4}   y^2 \right)\exp\left( \frac N2 \log \cosh \left( \frac{ x}{N^{1/4}} + \frac{\eta_{\alpha}}{2\sqrt N} y\right)\right. \nonumber \\
&  \qquad \qquad \left.+  \frac N2 \log \cosh \left( \frac{ x}{N^{1/4}} - \frac{\eta_{\alpha}}{2\sqrt N} y\right)\right)dx dy \label{rep_density}
\end{align}
where again
$$
K_2 := \frac 1 {{2 \pi \sqrt{\mathrm{det}\,C} Z_{N,\alpha,\beta}}}, \quad \quad K_3:=2^N  \frac 1 {{2 \pi \sqrt{\mathrm{det}\,C} Z_{N,\alpha,\beta}}}.
$$
(but this time $C$ is different).

Expanding $\log \cosh$ again up to fourth order we see that now the $x^2$-terms in the exponent cancel
and we obtain
\begin{align*}
&-\frac{\sqrt N}{2}x^2 -   \frac{\eta_{\alpha}}{4} y^2 + \frac N2 \left(\log \cosh \left( \frac{x}{N^{1/4}} + \frac{\eta_{\alpha}}{2\sqrt N} y\right)+  \log \cosh \left( \frac{x}{N^{1/4}} - \frac{\eta_{\alpha}}{2\sqrt N} y\right)\right)\\
=& -\frac 1 {12} x^4 -   \frac{1}{2} y^2\left( \frac{ \eta_{\alpha}}{2} -\left( \frac{  \eta_{\alpha}}{2}\right)^2 \right)+\mathcal{O}(N^{-\frac 12})
\end{align*}
with a $\mathcal{O}(N^{-\frac 12})$-term that depends on $x$ and $y$ but is uniformly bounded for $x$ and $y$ taken from a compact subset of $\R^2$.

For non-compact sets $A$ we note that the integrand in \eqref{rep_density} equals the respective integrand in the case $\alpha<0$, if we interchange the roles of $x$ and $y$ and replace $\eta_{\alpha}$ by $\kappa_{\alpha}$ (and use $\log \cosh (z)=\log \cosh(-z)$), i.e. using the notation  $A':= \{(y,x): (x,y) \in A\}$ we have
\begin{align*}
&\hat \chi_{N,\alpha, \beta} (A) = K_3 \int_{A'}    \exp\left( - \frac{ \sqrt N}{2}y^2 -   \frac{\eta_{\alpha}}{4}   x^2 \right)\exp\left( \frac N2 \log \cosh \left( \frac{ y}{N^{1/4}} + \frac{\eta_{\alpha}}{2\sqrt N} x\right)\right. \nonumber \\
&  \qquad \qquad \left.+  \frac N2 \log \cosh \left( \frac{\eta_{\alpha}}{2\sqrt N} x- \frac{ y}{N^{1/4}}\right)\right)dx dy.
\end{align*}
Again dividing the regime of integration  into an inner, an intermediate and an outer region, we already saw in the case $\alpha <0$ (note that in the case $\alpha<0$ we did not use the explicit form of $\kappa_{\alpha}$ in that part of the proof and hence we may replace $\kappa_{\alpha}$ by $\eta_{\alpha}$)
$$
\lim_{N\to \infty} \hat \chi_{N,\alpha, \beta} (A)= \int_A \exp\left(-\frac 1 {12} x^4 -   \frac{1}{2} y^2\left( \frac{  \eta_{\alpha}}{2} -\left( \frac{  \eta_{\alpha}}{2}\right)^2 \right) \right) dx\, dy.
$$
In particular, $\hat{w_2}=\frac{\sqrt N}{2} (m_1-m_2)$ converges in distribution to a $\mathcal{N}(0, \tilde \sigma^2)$-distribution, where
$\tilde \sigma^2=\frac 1{\frac{ \eta_{\alpha}}{2} -\left( \frac{  \eta_{\alpha}}{2}\right)^2}= \frac 1{\frac{  \beta - \alpha}{2} -\left( \frac{  \beta -\alpha}{2}\right)^2}$.

This weak convergence is equivalent to the convergence of the characteristic functions.

Computing the characteristic functions of the Gaussian distribution involved in the above proof, we have therefore shown that the characteristic function of $\hat w_2$ in the point $t\in \R$ satisfies
\begin{equation*}
\lim_{N \to \infty} \mathbb E(e^{it \hat{w_2}}) e^{-\frac{1}{2}(\frac{2}{\beta- \alpha})t^2 }=e^{-\frac{1}{2} t^2  [  \frac{  \beta - \alpha}{2} -\left( \frac{  \beta -\alpha}{2}\right)^2]^{-1}}.
\end{equation*}
Therefore,
\begin{equation*}
\lim_{N \to \infty} \mathbb E(e^{it \hat{w_2}}) = e^{ \frac{1}{2}(\frac{2}{\beta- \alpha})t^2 } e^{- \frac{1}{2} t^2 [  \frac{  \beta - \alpha}{2} -\left( \frac{  \beta -\alpha}{2}\right)^2]^{-1}}
=
e^{-\frac{1}{2}t^2 \left(\frac{1}{1-\frac{ \beta - \alpha}{2}}\right)}.
\end{equation*}
On the level of weak convergence, or convergence in distribution, this in turn implies that
\begin{equation*}
\hat{w_2} \xrightarrow{N\to \infty} \mathcal N(0, \sigma^2), \quad \sigma^2= \frac{1}{1-\frac{ \beta - \alpha}{2}}
\end{equation*}
in distribution.
Similarly, $\hat{w_1}=\frac{N^{\frac{1}{4}}}{2}(m_1+m_2)$ convoluted with an independent Gaussian distribution with mean $0$ and variance $\frac 1 {\sqrt N}$ converges to a probability measure $\rho$ on $\R$ with density
proportional to $\exp(-\frac 1{12} x^4)$. Since the $\mathcal{N}(0, \frac 1{\sqrt N})$-distribution converges to 0, as $N \to \infty$, this implies that $\frac{N^{1/4}}{2} (m_1-m_2)$  converges to $\rho$ in distribution, as well.
\end{proof}

\section{Proof of Theorem \ref{Crit_recov}}

We are now ready to give the proof of our central result, Theorem \ref{Crit_recov}, which consists of a combination of the previous two sections.

\begin{proof}[Proof of Theorem \ref{Crit_recov}]
In view of Theorem \ref{helptheo}, all we need to do is to estimate
$Z-Z'$ in the critical case.

Let us first assume that $\alpha >0$.
Then, Theorem \ref{crit_fluc} shows that $\frac{1}{\sqrt N} (\sum_{i\in S}\sigma_i-\sum_{j \notin S}\sigma_j)$ converges to
a normal distribution with mean $0$ and variance $\frac{2}{2-\beta+\alpha}$. In particular
$$
\V\left(\frac{1}{\sqrt N} \left(\sum_{i\in S}\sigma_i-\sum_{j \notin S}\sigma_j\right)\right) \to \frac{2}{2-\beta+\alpha}.
$$
But,
$$
\E \frac{1}{\sqrt N} \left(\sum_{i\in S}\sigma_i-\sum_{j \notin S}\sigma_j\right) =0,
$$
hence
\begin{eqnarray*}
&&\V\left(\frac{1}{\sqrt N} \left(\sum_{i\in S}\sigma_i-\sum_{j \notin S}\sigma_j\right)\right)
\\&=&
\frac{1}N\left(\sum_{i,j\in S}\E(\sigma_i \sigma_j)+\sum_{i,j\notin S}\E(\sigma_i \sigma_j)-2\sum_{k\in S, l\notin S}\E(\sigma_k \sigma_l)\right)\\
&=& \frac 1N\left( N+2 \frac{N}{2}\left(\frac{N}{2}-1\right)Z-2\frac{N^2}4 Z'\right),
\end{eqnarray*}
where we recall that $Z:= \E(\sigma_i \sigma_j)$, if $i \sim j$ and $Z':= \E(\sigma_i \sigma_j)$, if $i \not\sim j$.
Thus
\begin{equation}\label{diff1}
\left(\frac{N}{2}-1\right)Z-\frac N2 Z' \to \frac{2}{2-\beta+\alpha}-1= \frac{\beta-\alpha}{2-\beta+\alpha}.
\end{equation}
Note that we excluded the case $\alpha =0, \beta=2$ in which case the right hand side would explode.

Note that $Z$ is of order $\frac{1}{\sqrt N}$. On the one hand this follows  from Theorem 6 and the remark from the paper by Kirsch and Toth \cite{werner_curie_weiss_crit} cited at the beginning of Section 3 and on the other hand it also follows from Theorem \ref{crit_fluc} directly: As noted in Theorem \ref{crit_fluc} (b) $\frac 1{N^{3/4}} \sum_i \sigma_i$ converges to a non-degenerate limit distribution. Since $Z \ge |Z'|$ as a consequence of $\beta \ge|\alpha|$ ($\alpha=1$, i.e.~$\alpha=\beta$ being excluded, of course) this limit theorem implies that
$\frac{N(N-1)Z}{N^{3/2}}\le c$ for some constant $c$ which shows that $c$ is at most of order $\frac 1{\sqrt N}$. But then \eqref{diff1}
shows that
\begin{equation}\label{diff2}
Z-Z' \sim C /N \qquad \mbox{for some constant } C>0
\end{equation}
where $\sim$ indicates asymptotic equivalence.

Let us now turn to the other case where $\alpha <0$ and still $\beta+|\alpha|=2$. Then according to Theorem \ref{crit_fluc} the difference of the spins $\frac{1}{N^{3/4}} (\sum_{i\in S}\sigma_i-\sum_{j \notin S}\sigma_j)$ converges to
a distribution $\rho$ on $\R$ with density proportional to $\exp(-\frac 1{12} x^4)$. In particular, $\rho$ has finite variance
$$\tau:=\sqrt{12} \frac{\Gamma(\frac 34)}{\Gamma(\frac 14)}\sim 1.17$$
Then
$$
\V\left(\frac{1}{N^{3/4}} \left(\sum_{i\in S}\sigma_i-\sum_{j \notin S}\sigma_j\right)\right) \to \tau.
$$
Taking into account that again $\E ( \sum_{i\in S}\sigma_i-\sum_{j \notin S}\sigma_j) =0$ and following the same calculations as above we obtain:
\begin{equation} \label{diff3}
\frac{1}{\sqrt N}+\left(\frac{\sqrt N}2-\frac{1}{\sqrt{N}}\right)Z-\frac{\sqrt N}2 Z' \to \tau.
\end{equation}
The observation that this time $\frac 1 {\sqrt N} \sum_i \sigma_i$ converges to a normal distribution yields that $Z$ is of order $\frac 1N$, hence \eqref{diff3} implies
\begin{equation}\label{diff4}
Z-Z' \sim \tilde C /\sqrt{N} \qquad \mbox{for some (other) constant } \tilde C>0.
\end{equation}
Combining \eqref{diff2} and \eqref{diff4}, respectively, with the reconstruction result Theorem \ref{helptheo}  from \cite{BRS17_blockmodel} implies the assertion of Theorem \ref{Crit_recov}.
\end{proof}

\medskip
\noindent{\bf Acknowledgment}: Research of the first author was funded by the Deutsche Forschungsgemeinschaft (DFG, German Research Foundation) under Germany's Excellence Strategy EXC 2044 - 390685587, Mathematics M\"unster: Dynamics-Geometry-Structure. The second author has been supported by the German Research Foundation (DFG) via Research Training Group RTG 2131 High dimensional phenomena in probability -- fluctuations and discontinuity.

The authors are thankful to the anonymous referees for their very careful reading and constructive suggestions.


\end{document}